\documentclass[11pt,reqno]{amsart}
\usepackage{amsthm}
\usepackage{amsmath,amssymb,graphicx,amscd,xy}
\usepackage[usenames,dvipsnames]{color}
\usepackage{graphicx}
\usepackage{longtable}
\usepackage{mathtools}
\usepackage{color}
\usepackage{verbatim}
\usepackage{pict2e}
\usepackage[utf8,utf8x]{inputenc}
\usepackage{enumitem}
\usepackage{longtable}
\usepackage{hyperref}

\parskip=\smallskipamount

%
%
%
%

\begin{document}

\author{Lars Simon}
\address{Lars Simon, Department of Mathematical Sciences, Norwegian University of Science and Technology, Trondheim, Norway}
\email{lars.simon@ntnu.no}

\thanks{Part of this work was done during the international research program "Several Complex Variables and Complex Dynamics" at the Centre for Advanced Study at the Academy of Science and Letters in Oslo during the academic year 2016/2017.}
\title{A Homogeneous Function Constant Along The Leaves Of A Foliation}

\keywords{Foliation, Plurisubharmonic Polynomial, Bumping, Homogeneous Function.}

\begin{abstract}
Given a smooth foliation by complex curves (locally around a point $x\in\mathbb{C}^2\setminus\{0\}$) which is ``compatible'' with the foliation by spheres centered at the origin, we construct a smooth real-valued function $g$ in a neighborhood of said point, which is positive, homogeneous and constant along the leaves. A corollary we obtain from this is relevant to the problem of ``bumping out'' certain pseudoconvex domains in $\mathbb{C}^3$.
\end{abstract}

\maketitle

\section{Introduction}
The technique of ``bumping out'' bounded, smoothly bounded pseudoconvex domains of finite D'Angelo $1$-type in $\mathbb{C}^{n+1}$, $n\geq{1}$, has proven to be useful both in the construction of peak functions (see e.g.\ \cite{MR0492400}, \cite{MR1016439}) and in the construction of integral kernels for solving the $\overline{\partial}$-equation (see e.g.\ \cite{MR835766}, \cite{MR1070924}).

As in \cite{MR2452636}, a local bumping of a smoothly bounded pseudoconvex domain $\Omega\subseteq\mathbb{C}^{n+1}$, $n\geq{1}$, at a boundary point $\zeta\in\partial\Omega$ is defined to be a triple $(\partial\Omega{},U_{\zeta},\rho_{\zeta})$, such that:
\begin{itemize}
\item{$U_{\zeta}\subseteq\mathbb{C}^{n+1}$ is an open neighborhood or $\zeta$,}
\item{$\rho_{\zeta}\colon{}U_{\zeta}\to\mathbb{R}$ is smooth and plurisubharmonic,}
\item{$\rho_{\zeta}^{-1}(\{0\})$ is a smooth hypersurface in $U_{\zeta}$ that is pseudoconvex from the side $U_{\zeta}^{-}:=\{z\colon{}\rho_{\zeta}(z)<0\}$,}
\item{$\rho_{\zeta}(\zeta)=0$, but $\rho_{\zeta}<0$ on $U_{\zeta}\cap\left(\overline{\Omega}\setminus{\{\zeta\}}\right)$.}
\end{itemize}
A priori, such a local bumping needs to have additional properties for the upper mentioned constructions to work; specifically, when assuming $\Omega$ to be of finite type (``type'' refers to the D'Angelo $1$-type), one desires the order of contact between $\partial\Omega$ and $\rho_{\zeta}^{-1}(\{0\})$ at $\zeta$ to not exceed the type of $\zeta$ in any direction.

As seen in, e.g., \cite{MR2452636}, attempts to construct such a local bumping with the desired additional properties naturally lead to the problem of bumping homogeneous plurisubharmonic polynomials on $\mathbb{C}^n$. While it is not obvious how bumping results for homogeneous plurisubharmonic polynomials can be used to obtain {\emph{useful}} bumping results for the domains that motivate their study, Noell \cite{MR1207878} and Bharali \cite{MR2993440} have been successful in doing so.\\
Hence, bumping results for homogeneous plurisubharmonic polynomials are an important first step towards obtaining useful bumping results for domains.

Specifically, assume we are given a real-valued polynomial $P\not\equiv{0}$ with complex coefficients in $n$ complex variables $z_1,\dots{},z_n$ and their conjugates $\overline{z_1},\dots{},\overline{z_n}$. Furthermore assume that
\begin{itemize}
\item{$P$ is $\mathbb{R}$-homogeneous of degree $2k$, for some positive integer $k\geq{2}$,}
\item{$P$ is plurisubharmonic,}
\item{$P$ does not have any pluriharmonic terms.}
\end{itemize}

In this setting the question becomes, roughly speaking, how much one can subtract from $P$ without destroying plurisubharmonicity and while preserving homogeneity.\\
If $P$ is additionally assumed to not be harmonic along any complex line through $0\in\mathbb{C}^n$, then there exists a smooth function $F\colon{\mathbb{C}^n\setminus\{0\}}\to\mathbb{R}$, such that $F$ is positive, $\mathbb{R}$-homogeneous of degree $2k$ and such that $P-F$ is strictly plurisubharmonic on $\mathbb{C}^n\setminus\{0\}$ (the assumption that $P$ does not have any pluriharmonic terms is clearly not necessary for this). In the case $n=1$ this follows from a stronger result by Forn{\ae}ss and Sibony \cite[Lemma 2.4]{MR1016439}. In the case $n\geq{2}$ this was shown by Noell \cite{MR1207878}. Since the case $n=1$ is completely solved by this, we will assume $n\geq{2}$ from now on.

If, however, $P$ is allowed to be harmonic along complex lines through $0$, then one cannot expect to obtain such a strong result. The next best bumping result one could hope for is the existence of a function $H\colon\mathbb{C}^n\to\mathbb{R}$ having the following properties:
\begin{itemize}
\item{$H$ is $\mathbb{R}$-homogeneous of degree $2k$ and smooth away from 0,}
\item{$H\geq{0}$ everywhere with equality precisely in $0$ and along all complex lines through $0$ along which $P$ is harmonic (i.e.\ vanishes, since $P$ does not have any pluriharmonic terms),}
\item{$P-H$ is plurisubharmonic,}
\item{$\left|{\frac{P}{H}}\right|$ is bounded on ${\mathbb{C}}^n\setminus{}H^{-1}(\{0\})$.}
\end{itemize}
In dimension $n=2$, Bharali and Stens{\o}nes \cite{MR2452636} have obtained such bumping results in two cases, which, in some sense, can be interpreted as the two ``extremal behaviors'' the Levi-degeneracy set of $P$ can exhibit, when $P$ is allowed to be harmonic along complex lines through $0$.

In one of the upper mentioned cases studied by Bharali and Stens{\o}nes \cite{MR2452636}, the polynomial $P\colon\mathbb{C}^2\to\mathbb{R}$ is assumed to be harmonic along the smooth part of every level set of a non-constant entire function. They proceed by showing that $P$ can be written as the composition of a subharmonic homogeneous polynomial on $\mathbb{C}$ with a holomorphic homogeneous polynomial on $\mathbb{C}^2$. The bumping for $P$ is then constructed by applying the result by Forn{\ae}ss and Sibony \cite[Lemma 2.4]{MR1016439}. So, roughly speaking, they identify a foliation by complex curves along which $P$ is harmonic and then bump with something that is homogeneous of degree $2k$, constant along the leaves of the foliation and positive away from a small singular set.

If this assumption, that such an entire function exists, is replaced by the {\emph{weaker}} assumption that the determinant of the Complex Hessian Matrix of $P$ vanishes identically on $\mathbb{C}^2$, then no bumping results for $P$ are known thus far. Applying the Frobenius theorem, however, one {\emph{does}} obtain a foliation as above, albeit not necessarily a holomorphic one (see also the paper by Bedford and Kalka \cite{MR0481107}).\\
Therefore, it seems natural to replicate the previously explained bumping method of Bharali and Stens{\o}nes. In this setting, however, it is not clear whether there even exist {\emph{locally defined}} smooth functions, which are positive, homogeneous of degree $2k$ and constant along the (local) leaves of the foliation. This is the content of a question asked by Stens{\o}nes.

The purpose of this paper is to give an {\emph{affirmative answer}} to this question in a slightly more general setting: given a smooth $\mathbb{R}$-homogeneous vector field on an open subset of $\mathbb{C}^2$, such that the collection of complex vector spaces spanned by said vector field is an involutive distribution of real dimension $2$, we construct, assuming that a certain compatibility condition is satisfied, a positive smooth function that is homogeneous of any desired degree and constant along the (local) leaves of the foliation induced by the Frobenius theorem. As a corollary we obtain a positive answer to Stens{\o}nes' question. A precise statement of these results can be found in the following section.

\section{Preliminaries and Statement of Results}

From now on, we fix a point $x\in\mathbb{C}^2\setminus{\{0\}}$, an open neighborhood $N$ of $x$ in $\mathbb{C}^2\setminus\{0\}$ and a vector field
\begin{align*}
\mathcal{V}=
\begin{pmatrix} V_1+i\cdot{V_2} \\ V_3+i\cdot{V_4}\end{pmatrix}
\colon{N}\to\mathbb{C}^2\text{,}
\end{align*}
such that $\mathcal{V}$ vanishes nowhere on $N$.

\theoremstyle{definition}
\newtheorem{topmanfolipaper}[propo]{Assumption}
\begin{topmanfolipaper}
\label{topmanfolipaper}
We make the following additional assumptions:
\begin{enumerate}
\item\label{condi1}{$\mathcal{V}$ is of class $\mathcal{C}^{\infty}$,}
\item\label{condi3}{The collection of $\mathbb{C}$-vector spaces spanned by $\mathcal{V}$ at the points in $N$ is involutive (as a $\mathcal{C}^{\infty}$ distribution of real dimension $2$ on $N$),}
\item\label{condi4}{$\mathcal{V}$ is $\mathbb{R}$-homogeneous of degree $m$ for some positive integer $m$,}
\item\label{condi5}{$p$ and $\mathcal{V}(p)$ are $\mathbb{C}$-linearly independent for all $p\in{N}$.}
\end{enumerate}
\end{topmanfolipaper}

{\noindent
We denote the foliation obtained from applying the Frobenius theorem to the distribution in Property \ref{condi3} as $\mathcal{F}$.
}

\theoremstyle{remark}
\newtheorem{compacondi}[propo]{Remark}
\begin{compacondi}
\label{compacondi}
Properties \ref{condi4} and \ref{condi5} can be interpreted as a compatibility condition between $\mathcal{F}$ and the foliation by spheres centered at the origin.
\end{compacondi}

Under these assumptions, the main result of this paper can be stated as follows:

\theoremstyle{plain}
\newtheorem{maintheoremfoliation}[propo]{Theorem}
\begin{maintheoremfoliation}
\label{maintheoremfoliation}
There exist an open neighborhood $W\subseteq{N}$ of $x$ in $\mathbb{C}^2$ and, given an arbitrary positive integer $n$, a function $g\colon{W}\to\mathbb{R}$ with the following properties:
\begin{itemize}
\item{$g$ is constant along the leaves of the restriction of $\mathcal{F}$ to $W$,}
\item{$g$ is $\mathbb{R}$-homogeneous of degree $n$,}
\item{$g>0$ on $W$,}
\item{$g$ is of class $\mathcal{C}^{\infty}$.}
\end{itemize}
\end{maintheoremfoliation}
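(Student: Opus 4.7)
The plan is to pass to the local leaf space of $\mathcal{F}$, construct a positive function on it that scales correctly under the induced dilation, and pull it back to obtain $g$. The starting observation is that the dilation $\phi_{t}\colon p\mapsto tp$ preserves $\mathcal{F}$: by Property \ref{condi4} one has $\mathcal{V}(tp)=t^{m}\mathcal{V}(p)$, while $(d\phi_{t})_{p}$ is just multiplication by $t$ on $T_{p}\mathbb{C}^{2}\cong\mathbb{C}^{2}$, so the complex line $\mathbb{C}\cdot\mathcal{V}(p)$ at $p$ is sent to $\mathbb{C}\cdot(t\mathcal{V}(p))=\mathbb{C}\cdot\mathcal{V}(tp)$ at $tp$, and leaves go to leaves. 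Applying the Frobenius theorem at $x$, I would choose a small flow-box $U\subseteq N$ giving a smooth submersion $\pi\colon U\to Q$ onto a smooth $2$-manifold $Q$ whose fibers are the plaques of $\mathcal{F}|_{U}$. After shrinking $U$ about $x$, the dilation descends (for $t$ near $1$) to a smooth local $\mathbb{R}_{>0}$-action on $Q$ near $\pi(x)$, whose infinitesimal generator $\tilde{E}$ is the $\pi$-pushforward of the Euler field $E(p)=p$.

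The decisive point is that $\tilde{E}$ is nowhere vanishing on $Q$: this is exactly the content of Property \ref{condi5}, which says $p\notin\mathbb{C}\cdot\mathcal{V}(p)=\ker(d\pi)_{p}$. By the flow-box theorem applied to $\tilde{E}$ on $Q$, I may choose coordinates $(s,u)$ near $\pi(x)$ in which $\tilde{E}=\partial/\partial s$. Since $E$ generates the flow $p\mapsto e^{\tau}p$, the induced dilation by $t$ acts on these coordinates as $(s,u)\mapsto (s+\log t,\,u)$. Setting $h(s,u):=e^{ns}$ yields a smooth positive function with $h(s+\log t,u)=t^{n}h(s,u)$. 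Define $W:=\pi^{-1}(\text{this coordinate patch})$ and $g:=h\circ\pi$. Then $g$ is constant on the leaves of $\mathcal{F}|_{W}$ (because it factors through $\pi$), is $\mathbb{R}$-homogeneous of degree $n$ (because dilations act as $s\mapsto s+\log t$ downstairs and $h=e^{ns}$), positive, and smooth.

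The main delicate point is the setup rather than the algebra: one has to shrink $N$ so that $\pi$ genuinely realizes $U$ as a submersion onto a smooth $2$-manifold $Q$ (each plaque meeting the transversal exactly once), and so that $E$ is $\pi$-projectable to a smooth vector field $\tilde{E}$ on $Q$ corresponding to a well-defined smooth local $\mathbb{R}_{>0}$-action. Both facts are routine consequences of the distribution being $\phi_{t}$-invariant, but they require care in bookkeeping domains so that $p,tp\in W$ really does imply $g(tp)=t^{n}g(p)$. Once these are in place, the argument reduces to the standard flow-box theorem applied on the leaf space, with the exponential form $e^{ns}$ delivering homogeneity for free.
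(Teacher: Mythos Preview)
Your argument is correct and follows the same architecture as the paper's proof: pass to the local leaf space via a Frobenius chart, observe that the Euler field $E(p)=p$ projects to a nowhere-vanishing vector field $\tilde{E}$ there (this is exactly where Assumptions~\ref{topmanfolipaper}\,(\ref{condi4}) and~(\ref{condi5}) enter, as in the paper), straighten $\tilde{E}$, and read off $g$. The paper executes these same steps slightly less directly: rather than applying the full flow-box theorem to $\tilde{E}$, it applies Frobenius to the line field $\mathbb{R}\cdot\tilde{E}$ to produce a transverse coordinate $\omega_{2}$, then invokes the implicit function theorem to define $\mathcal{T}(q)$ as the scaling factor carrying $q$ to a fixed $\omega_{1}$-level, sets $g(q)=\mathcal{T}(q)^{-n}$, and finally verifies homogeneity by a separate ODE computation. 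Your normalization, in which dilation by $t$ acts on the leaf space as $s\mapsto s+\log t$, makes the choice $g=e^{ns}$ and the homogeneity check immediate. Both arguments require the same care with domains so that ``leaves go to leaves under dilation'' holds literally at the level of plaques; the paper isolates this as Lemma~\ref{localfolichartsubm}\,(\ref{folipaperannoying5}), which is precisely the well-definedness of your induced local $\mathbb{R}_{>0}$-action on $Q$.
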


\theoremstyle{remark}
\newtheorem{moregeneralgogogo}[propo]{Remark}
\begin{moregeneralgogogo}
\label{moregeneralgogogo}
In order to show the existence of the function $g$ in Theorem \ref{maintheoremfoliation}, one needs to find a function with prescribed behavior with respect to {\emph{both}} the foliation $\mathcal{F}$ and the foliation by spheres centered at the origin. This will be possible because, as mentioned in Remark \ref{compacondi}, the two foliation satisfy a certain ``compatibility condition''.\\
Both the setting in $\mathbb{C}^2$ and the foliation by spherical shells are quite specific. It is likely that one can adjust the method of proof in this paper to derive compatibility conditions that two (or more) foliations need to satisfy in order to admit non-trivial functions with prescribed behavior with respect to each of the foliations, which might be of independent interest.\\
Furthermore, it {\emph{might}} be possible to use an appropriate global version of the implicit function theorem in order to get a global result in the spirit of Theorem \ref{maintheoremfoliation}.
\end{moregeneralgogogo}

We end this section by stating the following corollary, which is relevant for the bumping problem:

\theoremstyle{plain}
\newtheorem{bumppingfoliacoro}[propo]{Corollary}
\begin{bumppingfoliacoro}
\label{bumppingfoliacoro}
Let $x$ be as above and let $P$ be a real-valued polynomial with complex coefficients in two complex variables $(z,w)$ and their conjugates $(\overline{z},\overline{w})$. Assume that
\begin{itemize}
\item{$P$ is $\mathbb{R}$-homogeneous of degree $2k$ for some integer $k\geq{2}$,}
\item{the Complex Hessian matrix of $P$ does not vanish at $x$,}
\item{$x$ does not lie on a complex line through $0\in\mathbb{C}^2$ along which $P$ is harmonic,}
\item{the Levi determinant of $P$ vanishes identically on a neighborhood of $x$ and hence, by real-analyticity, on all of $\mathbb{C}^2$.}
\end{itemize}
Then there exist an open neighborhood $W$ of $x$ in $\mathbb{C}^2$ and a function $g\colon{W}\to\mathbb{R}$ with the following properties:
\begin{itemize}
\item{There exists a smooth foliation of $W$ by complex curves along which $P$ is harmonic,}
\item{$g$ is constant along the leaves of said foliation,}
\item{$g$ is $\mathbb{R}$-homogeneous of degree $2k$,}
\item{$g>0$ on $W$,}
\item{$g$ is of class $\mathcal{C}^{\infty}$.}
\end{itemize}
\end{bumppingfoliacoro}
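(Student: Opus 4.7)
The plan is to realize the Levi-null directions of $P$ as a vector field $\mathcal{V}$ satisfying Assumption~\ref{topmanfolipaper} and then to invoke Theorem~\ref{maintheoremfoliation} with $n=2k$. Let $H_P=(h_{ij})$ with $h_{ij}=\partial^{2}P/\partial z_i\,\partial\bar z_j$ denote the Complex Hessian of $P$; each $h_{ij}$ is an $\mathbb{R}$-homogeneous polynomial of degree $2k-2$. The hypotheses $H_P(x)\neq 0$ and $\det H_P\equiv 0$ force $H_P$ to have constant rank $1$ on some open neighborhood $N\subseteq\mathbb{C}^{2}\setminus\{0\}$ of $x$. After possibly permuting the roles of $z_1$ and $z_2$ so that the first row of $H_P$ is non-zero at $x$, I would set
\[
\mathcal{V}(p):=\begin{pmatrix}-\overline{h_{12}(p)}\\[2pt] h_{11}(p)\end{pmatrix}\text{.}
\]
Using $\det H_P\equiv 0$ together with the Hermiticity $h_{21}=\overline{h_{12}}$, a direct computation yields $\mathcal{V}^{T}H_{P}\equiv 0$, so that $\mathcal{V}$ spans the left kernel of $H_P$ — equivalently, the Levi-null direction — pointwise on $N$. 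Shrinking $N$, $\mathcal{V}$ is smooth, nowhere vanishing, and $\mathbb{R}$-homogeneous of the positive degree $m:=2k-2$.

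Next I would verify Assumption~\ref{topmanfolipaper} for this $\mathcal{V}$. Items (\ref{condi1}), (\ref{condi4}), and the non-vanishing requirement are immediate from the construction. For (\ref{condi3}), the real distribution $\mathrm{span}_{\mathbb{R}}(\mathcal{V},i\mathcal{V})$ coincides with the real kernel of the closed real $(1,1)$-form $i\,\partial\bar\partial P$, whose rank is constant on $N$; the kernel of a closed $2$-form of constant rank is integrable by Cartan's formula (the resulting foliation is that considered by Bedford and Kalka~\cite{MR0481107}). The delicate point is (\ref{condi5}): by continuity it suffices to show that $x$ and $\mathcal{V}(x)$ are $\mathbb{C}$-linearly independent. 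Suppose otherwise, so that $x\in\mathbb{C}\cdot\mathcal{V}(x)$. Since the distribution is $\mathbb{C}$-linear and $\mathcal{V}$ is $\mathbb{R}$-homogeneous of positive degree, the positive real scaling flow preserves the distribution; consequently the radial direction is tangent to the distribution at every point of $\mathbb{R}_{>0}\cdot x$, so that $\mathbb{R}_{>0}\cdot x$ lies in the leaf $\mathcal{L}$ of $\mathcal{F}$ through $x$. Being a smooth $J$-invariant real surface in $\mathbb{C}^{2}$, $\mathcal{L}$ is a holomorphically embedded complex curve; parametrizing it by some holomorphic $\psi\colon V\to\mathbb{C}^{2}$ and taking a linear form $\ell$ cutting out $\mathbb{C}\cdot x$, the holomorphic function $\ell\circ\psi$ vanishes along a real arc in $V$ and therefore identically by the identity theorem. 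Hence $\mathcal{L}\subseteq\mathbb{C}\cdot x$ locally; combining with the leaf-harmonicity of $P$ (shown in the final step) and polynomiality, $P$ would be harmonic on all of $\mathbb{C}\cdot x$ — contradicting the third hypothesis.

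Once Assumption~\ref{topmanfolipaper} has been verified, Theorem~\ref{maintheoremfoliation} applied with $n=2k$ supplies the neighborhood $W\subseteq N$ of $x$ and the function $g$ with the stated smoothness, positivity, $\mathbb{R}$-homogeneity of degree $2k$, and constancy along the leaves of $\mathcal{F}|_{W}$. It remains only to observe that $P$ is harmonic along these leaves: since the distribution is $J$-invariant, each leaf is a smooth complex curve, and for any local holomorphic parametrization $\phi$ of a leaf one has $\phi'(t)\in\mathbb{C}\cdot\mathcal{V}(\phi(t))$, whence $\phi'(t)^{T}H_{P}(\phi(t))=0$ and
\[
\frac{\partial^{2}(P\circ\phi)}{\partial t\,\partial\bar t}(t)\;=\;\phi'(t)^{T}H_{P}(\phi(t))\,\overline{\phi'(t)}\;=\;0\text{.}
\]
The only genuinely non-routine step in this plan is the identity-theorem argument used to establish~(\ref{condi5}); the remaining verifications are direct.
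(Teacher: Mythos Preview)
Your proposal is correct and follows essentially the same route as the paper: define $\mathcal{V}$ from a non-vanishing row of $H_P$, verify Assumption~\ref{topmanfolipaper}, and apply Theorem~\ref{maintheoremfoliation} with $n=2k$. The only differences are cosmetic: for involutivity you invoke the integrability of the kernel of the closed form $i\partial\bar\partial P$ (as in Bedford--Kalka) whereas the paper computes the Lie bracket $[\mathcal{V},i\mathcal{V}]$ directly; and for Property~(\ref{condi5}) you argue ``leaf contains a real arc of $\mathbb{R}_{>0}\cdot x$ $\Rightarrow$ leaf $\subseteq\mathbb{C}\cdot x$'' via a linear form and the identity theorem, while the paper reaches the same conclusion through foliation-chart coordinates before deducing $H_P(sp)\cdot p=0$ for all $s\in\mathbb{C}$. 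One small remark: you establish~(\ref{condi5}) only at $x$ and then shrink $N$ by continuity, whereas the paper first arranges that $N$ meets no harmonic line through $0$ and then proves~(\ref{condi5}) at every $p\in N$; both are fine, but be explicit that the ``shrinking by continuity'' step is what actually yields~(\ref{condi5}) on all of $N$.
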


\section{Proof of Theorem \ref{maintheoremfoliation}}
This section is devoted to proving Theorem \ref{maintheoremfoliation}. We have to find an open neighborhood $W$ of $x$ in $N$ having certain properties. By a slight {\emph{abuse of notation}}, we will (instead of defining the set $W$) shrink the open neighborhood $N$ of $x$ a finite amount of times and establish the existence of a function $g$ with the desired properties on $N$. Each time we shrink $N$, we also restrict the foliation $\mathcal{F}$ accordingly, which we will not always comment on.\\
We begin with the following lemma (as usual, $\mathbb{D}$ denotes the open unit disc centered at $0$ in $\mathbb{C}$):

\theoremstyle{plain}
\newtheorem{localfolichartsubm}[propo]{Lemma}
\begin{localfolichartsubm}
\label{localfolichartsubm}
After shrinking $N$ if necessary and restricting $\mathcal{F}$ accordingly, there exist $0<{\delta}<1$, smooth functions ${u_1},{u_2}\colon{N}\to\mathbb{R}$ and a smooth function $\phi\colon{N}\to\mathbb{C}$, such that:
\begin{enumerate}
\item\label{folipaperannoying1}{the real gradients $\nabla{u_1}$ and $\nabla{u_2}$ are $\mathbb{R}$-linearly independent at every point in $N$ (in particular they vanish nowhere on $N$),}
\item\label{folipaperannoying2}{for $j\in\{1,2\}$, the real gradient $\nabla{u_j}$ is orthogonal to both $\mathcal{V}$ and $i\cdot\mathcal{V}$ at every point in $N$ with respect to the standard inner product on the $\mathbb{R}$-vector space $\mathbb{R}^4$,}
\item\label{folipaperannoying3}{the leaves of $\mathcal{F}$ are precisely the level sets of $u:={u_1}+i{u_2}$, which are complex submanifolds of $\mathbb{C}^2$ of complex dimension $1$,}
\item\label{folipaperannoying4}{the map $\Phi{:=}({\phi},u)$ is a $\mathcal{C}^{\infty}$ diffeomorphism from $N$ onto $\mathbb{D}\times\mathbb{D}$ and ${\Phi}(x)=0$, i.e.\ ${\phi}(x)=0$ and $u(x)=0$,}
\item\label{folipaperannoying5}{For all $t\in{(1-{\delta},1+{\delta})}$, $a,b\in{N}$ we have the following:

If $u(a)=u(b)$ and if $ta,tb\in{N}$, then $u(ta)=u(tb)$.}
\end{enumerate}
\end{localfolichartsubm}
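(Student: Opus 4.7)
My plan is to apply the Frobenius theorem to the involutive distribution from Property~\ref{condi3} and then construct the submersion $u$ geometrically using the complex line $T:=\mathbb{C}\cdot x$ as a transversal; the $\mathbb{R}$-homogeneity of $\mathcal{V}$ will make the scaling $\sigma_\lambda\colon p\mapsto\lambda p$ preserve both $\mathcal{F}$ and $T$, which is what forces property~\ref{folipaperannoying5}. In detail, I first shrink $N$ so that Frobenius's theorem applies and, using the transversality of $T$ to the leaf of $\mathcal{F}$ at $x$ (which follows from Property~\ref{condi5}), every plaque of $\mathcal{F}|_N$ meets $T$ in exactly one point. I define $u\colon N\to\mathbb{C}$ by the rule that $u(p)$ is the unique $c\in\mathbb{C}$ for which $(1+c)\,x$ is the point of $T$ in the plaque of $p$; in particular $u(x)=0$. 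I take $\phi\colon N\to\mathbb{C}$ to be the $\mathbb{C}$-linear projection $\mathbb{C}^2\to\mathbb{C}\cdot\mathcal{V}(x)$ along $\mathbb{C}\cdot x$, identified with $\mathbb{C}$ via $\mathcal{V}(x)\mapsto 1$, so that $\phi(x)=0$.

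Writing $u=u_1+iu_2$, properties~\ref{folipaperannoying1}--\ref{folipaperannoying4} follow quickly. The map $u$ is a submersion, so $\nabla u_1,\nabla u_2$ are $\mathbb{R}$-linearly independent, giving \ref{folipaperannoying1}. Constancy of $u$ along leaves yields $du_j(\mathcal{V})=du_j(i\mathcal{V})=0$ for $j\in\{1,2\}$, which translates into the orthogonality stated in~\ref{folipaperannoying2}. The kernel $\ker du_p$ equals the complex line $\mathbb{C}\cdot\mathcal{V}(p)$, so every level set of $u$ is a complex submanifold, which is~\ref{folipaperannoying3}. Since $d\phi_x$ restricts to the identity on $\ker du_x=\mathbb{C}\cdot\mathcal{V}(x)$ while $du_x$ sends $\mathbb{C}\cdot x$ isomorphically onto $\mathbb{C}$, the map $\Phi=(\phi,u)$ is a local diffeomorphism at $x$; shrinking $N$ once more to $\Phi^{-1}(\mathbb{D}_r\times\mathbb{D}_r)$ for small $r>0$ and rescaling $\phi,u$ by $1/r$ then delivers~\ref{folipaperannoying4} without spoiling the previous three properties.

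The hard step is~\ref{folipaperannoying5}. Property~\ref{condi4} yields $(d\sigma_\lambda)_p(\mathcal{V}(p))=\lambda\,\mathcal{V}(p)=\lambda^{1-m}\mathcal{V}(\lambda p)$, so $\sigma_\lambda$ preserves the distribution wherever $p$ and $\lambda p$ lie in $N$ and hence maps leaves of $\mathcal{F}$ to leaves of $\mathcal{F}$; moreover $\sigma_\lambda$ clearly preserves $T$. My plan is to fix a slightly larger open neighborhood $N_0$ of $\overline{N}$ on which the leaf projection onto $T$ is already defined, and to use a compactness/continuity argument to find $\delta\in(0,1)$ such that for every $p\in N$ and every $\lambda\in(1-\delta,1+\delta)$, the plaque segment in $\mathcal{F}|_{N_0}$ from $p$ to $(1+u(p))\,x$ is sent into $N_0$ by $\sigma_\lambda$; this is feasible because $N$ is small, so these plaque segments are short and depend continuously on the endpoint $p\in\overline{N}$. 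Under this arrangement $\sigma_\lambda$ carries the plaque segment to a path in $N_0$ tangent to the distribution connecting $\lambda p$ to $\lambda(1+u(p))\,x\in T\cap N_0$, which forces both endpoints into the same plaque of $\mathcal{F}|_{N_0}$; hence $1+u(\lambda p)=\lambda\bigl(1+u(p)\bigr)$. Property~\ref{folipaperannoying5} then follows immediately: if $u(a)=u(b)$ and $\lambda a,\lambda b\in N$, then $u(\lambda a)=\lambda(1+u(a))-1=\lambda(1+u(b))-1=u(\lambda b)$. The delicate point is precisely this compactness argument, but note that I do not need the entire plaque of $p$ in $\mathcal{F}|_{N_0}$ to scale into $N_0$, only the short segment from $p$ to its image on $T$, which is what makes a sufficient shrinking of $N$ possible.
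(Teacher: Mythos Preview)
Your argument is correct, and it takes a genuinely different route from the paper's proof. The paper obtains $u$ from an \emph{arbitrary} Frobenius chart and then proves Property~\ref{folipaperannoying5} by a pointwise gradient computation: for fixed $c$ it parametrizes the leaf $\{u=c\}$ via $\gamma\mapsto\Gamma_1(\gamma)=\Phi^{-1}(\gamma,c)$, observes that the columns of $J_{\Gamma_1}(\gamma)$ lie in $\mathbb{C}\cdot\mathcal{V}(\Gamma_1(\gamma))$, uses homogeneity of $\mathcal{V}$ to conclude the same for $J_{\Gamma_t}(\gamma)=tJ_{\Gamma_1}(\gamma)$ at the point $\Gamma_t(\gamma)=t\Gamma_1(\gamma)$, and hence $u\circ\Gamma_t$ is constant. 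You instead build $u$ from a \emph{specific} transversal $T=\mathbb{C}\cdot x$ that is already $\sigma_\lambda$-invariant; since $\sigma_\lambda$ also preserves the distribution, the leaf projection onto $T$ intertwines with scaling, yielding the explicit identity $1+u(\lambda p)=\lambda(1+u(p))$, which is strictly stronger than Property~\ref{folipaperannoying5}. In fact your identity already hands you a candidate for the function $g$ in Theorem~\ref{maintheoremfoliation}: the map $p\mapsto|1+u(p)|^{n}$ is smooth, positive, constant on leaves, and locally $\mathbb{R}$-homogeneous of degree $n$, so your choice of transversal potentially shortcuts the later Lemmas~\ref{frobbyondiscfolipaper}--\ref{defofcalitfolipaper} in the paper. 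The paper's computation, by contrast, is agnostic about the chart and is entirely self-contained once a foliation box is fixed.

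Two small points to tighten. First, your identity $1+u(\lambda p)=\lambda(1+u(p))$ is stated for the original $u$, but in establishing Property~\ref{folipaperannoying4} you rescale $u$ by $1/r$; after that rescaling the identity becomes $u(\lambda p)=\lambda u(p)+(\lambda-1)/r$, which still yields Property~\ref{folipaperannoying5} but is no longer the clean formula you quote. Second, the order of shrinkings should be reversed in your write-up: first choose a foliation box $N_0$ on which the leaf projection onto $T$ is defined, \emph{then} take $N\Subset N_0$ so that the plaque segments from $p\in\overline{N}$ to $T$ form a compact family in $N_0$ (e.g.\ by writing them as straight lines in the chart), and finally choose $\delta$ by compactness. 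As written you introduce $N_0$ only after $N$, which makes the compactness step look circular. With these adjustments your proof is complete.
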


\begin{proof}
By assumption, the collection of $\mathbb{C}$-vector spaces spanned by $\mathcal{V}$ at the points in $N$ is an involutive $\mathcal{C}^{\infty}$ distribution of real dimension $2$ on $N$.

Hence Properties \ref{folipaperannoying1}, \ref{folipaperannoying2}, \ref{folipaperannoying3} and \ref{folipaperannoying4} will follow by applying the Frobenius theorem and the submersion theorem, while shrinking $N$ appropriately several (finitely many) times and restricting the foliation $\mathcal{F}$ accordingly.

Regarding the level sets of $u$ being complex submanifolds of $\mathbb{C}^2$, we note that they are (embedded) smooth submanifolds of $\mathbb{C}^2$ of real dimension $2$, whose tangent spaces at every point can easily be seen to be complex linear subspaces of $\mathbb{C}^2$.

It remains to address Property \ref{folipaperannoying5}. Let $u$ and $\phi$ satisfy Properties \ref{folipaperannoying1}, \ref{folipaperannoying2}, \ref{folipaperannoying3} and \ref{folipaperannoying4}. We make the following claim within the proof ($\mathbb{D}_{1/2}$ denotes the open disc of radius $1/2$ in $\mathbb{C}$ centered at $0$):

\theoremstyle{plain}
\newtheorem*{reductioninlemmabproof}{Claim}
\begin{reductioninlemmabproof}
There exists $0<{\delta}<1$, such that for all $t\in{(1-{\delta},1+{\delta})}$, $a,b\in{\Phi}^{-1}({\mathbb{D}_{1/2}}\times{\mathbb{D}_{1/2}})$ we have the following:
\begin{itemize}
\item{$ta,tb\in{N}$,}
\item{if $u(a)=u(b)$, then $u(ta)=u(tb)$.}
\end{itemize} 
\end{reductioninlemmabproof}

After having shown the claim, we can finish the proof by replacing $N$ by ${\Phi}^{-1}({\mathbb{D}_{1/2}}\times{\mathbb{D}_{1/2}})$ and rescaling $\phi$ and $u$. Hence it suffices to prove the claim.

To this end, we note that ${\Phi}^{-1}({\mathbb{D}_{1/2}}\times{\mathbb{D}_{1/2}})\Subset{N}$, i.e.\ there exists $0<{\delta}<1$, such that $t\cdot{p}\in{N}$, whenever $t\in{(1-{\delta},1+{\delta})}$ and $p\in{{\Phi}^{-1}({\mathbb{D}_{1/2}}\times{\mathbb{D}_{1/2}})}$.

Now let $a,b\in{\Phi}^{-1}({\mathbb{D}_{1/2}}\times{\mathbb{D}_{1/2}})$ with $u(a)=u(b)$. By choice of $\delta$ we have $ta,tb\in{N}$, whenever $t\in{(1-{\delta},1+{\delta})}$. Set $c:=u(a)=u(b)\in\mathbb{D}_{1/2}$ and, for $t\in{(1-{\delta},1+{\delta})}$, define a map
\begin{align*}
\Gamma_t\colon\mathbb{D}_{1/2}\to{N}\text{, }\gamma\mapsto{t\cdot}{\Phi}^{-1}({\gamma},c)\text{,}
\end{align*}
which is welldefined by choice of $\delta$. For $j\in\{1,2\}$, we compute the real gradient of $u_j\circ\Gamma_t\colon\mathbb{D}_{1/2}\to\mathbb{R}$ in real coordinates. We get for $\gamma\in{\mathbb{D}_{1/2}}$:
\begin{align*}
\mathbb{R}^{1\times{2}}\ni\nabla{({u_j\circ\Gamma_t})}({\gamma}) & =\nabla{(u_j)({\Gamma_t}({\gamma}))}\cdot{\operatorname{J}_{\Gamma_t}}({\gamma})\\
& =\nabla{(u_j)(t\cdot{\Gamma_1}({\gamma}))}\cdot{t}\cdot{\operatorname{J}_{\Gamma_1}}({\gamma})\text{,}
\end{align*}
where ${\operatorname{J}_{\Gamma_t}}({\gamma})\in\mathbb{R}^{4\times{2}}$ denotes the Jacobian matrix of $\Gamma_t$ evaluated at $\gamma$. But $u\circ\Gamma_1\equiv{c}$, so $\nabla{(u_j\circ\Gamma_1)}\equiv{0}$ for $j\in\{1,2\}$, i.e.\ we have
\begin{align*}
\nabla{(u_j)({\Gamma_1}({\gamma}))}\cdot{\operatorname{J}_{\Gamma_1}}({\gamma})=0
\end{align*}
for all $\gamma\in\mathbb{D}_{1/2}$, $j\in\{1,2\}$. Hence both columns of ${\operatorname{J}_{\Gamma_1}}({\gamma})$ are orthogonal to $\nabla{(u_j)({\Gamma_1}({\gamma}))}$ with respect to the standard inner product on $\mathbb{R}^4$. Since $\mathcal{V}$ vanishes nowhere on $N$ and since we are in real dimension $4$, we can use Properties \ref{folipaperannoying1} and \ref{folipaperannoying2} to deduce that the columns of ${\operatorname{J}_{\Gamma_1}}({\gamma})$ are contained in the {\emph{complex}} vector space spanned by $\mathcal{V}({\Gamma_1}({\gamma}))$.\\
But, by \ref{condi4} in Assumption \ref{topmanfolipaper}, we immediately get that the columns of ${\operatorname{J}_{\Gamma_t}}({\gamma})=t\cdot{}{\operatorname{J}_{\Gamma_1}}({\gamma})$ are contained in the {\emph{complex}} vector space spanned by $\mathcal{V}(t\cdot{\Gamma_1}({\gamma}))$ and hence orthogonal to $\nabla{(u_j)(t\cdot{\Gamma_1}({\gamma}))}$, $j\in\{1,2\}$, with respect to the standard inner product on $\mathbb{R}^4$. Since $\mathbb{D}_{1/2}$ is connected, this shows together with the above calculation, that $u\circ\Gamma_t$ is constant. Noting that ${\phi}(a),{\phi}(b)\in\mathbb{D}_{1/2}$, we compute:
\begin{align*}
u(ta)=u({{t\cdot}{\Phi}^{-1}({{\phi}(a)},u(a))})=u({{t\cdot}{\Phi}^{-1}({{\phi}(a)},c)})=({u\circ\Gamma_t})({\phi}(a))\text{.}
\end{align*}
Analogously we get that $u(tb)=({u\circ\Gamma_t})({\phi}(b))$. Since $u\circ\Gamma_t$ is constant, we obtain $u(ta)=u(tb)$, as desired.
\end{proof}

\theoremstyle{remark}
\newtheorem{localglobalmiregalleaf}[propo]{Remark}
\begin{localglobalmiregalleaf}
\label{localglobalmiregalleaf}
It should be noted that the distinction between local and global leaves disappears, whenever $N$ is shrunk in a way that it coincides with the open set associated to a foliation chart containing $x$, since we always restrict the foliation appropriately. Because of this, we will not distinguish between local and global leaves of the foliation $\mathcal{F}$ for the remainder of this section, unless stated otherwise.
\end{localglobalmiregalleaf}

\theoremstyle{remark}
\newtheorem{leavesscalehomog}[propo]{Remark}
\begin{leavesscalehomog}
\label{leavesscalehomog}
Property \ref{folipaperannoying5} in Lemma \ref{localfolichartsubm} says that, roughly speaking, the leaves of the foliation scale homogeneously. While this may appear trivial at first glance, it should be noted that this is a property which a priori could easily be destroyed by restricting the foliation to the ``wrong'' open set.
\end{leavesscalehomog}

Armed with Lemma \ref{localfolichartsubm}, we now set for $\tau\in\mathbb{D}$:
\begin{align*}
p_{\tau}:={\Phi}^{-1}(0,{\tau})\in{N}\text{,}
\end{align*}
i.e.\ for each leaf $\{u={\tau}\}$ of $\mathcal{F}$ we pick one point on it, such that this choice depends smoothly on the leaf. Since $N$ is open, we find a ${\delta}_{p_{\tau}}>0$, such that $t\cdot{p_{\tau}}\in{N}$, whenever $1-2{{\delta}_{p_{\tau}}}<{t}<{1}+2{\delta}_{p_{\tau}}$. Hence, for all $\tau\in\mathbb{D}$, we can define a smooth map
\begin{align*}
S_{\tau}\colon{}({{1}-2{\delta}_{p_{\tau}}},{{1}+2{\delta}_{p_{\tau}}})\to\mathbb{D}\text{, }t\mapsto{u(t\cdot{p_{\tau}})}\text{.}
\end{align*}
Intuitively speaking, we go along the real ray through $0\in\mathbb{C}^2$ and $p_{\tau}\in{N}$ and apply $u$, which amounts to checking which leaf we are on. So the derivative $S_{\tau}'$ of $S_{\tau}$ measures ``how the leaf changes along the ray''.

Computing the derivative at $t=1$ in real coordinates, the map
\begin{align*}
\colon\mathbb{D}\to\mathbb{R}^2\text{, }\tau\mapsto{S_{\tau}'}(1)=\begin{pmatrix}
\nabla{u_1}({p_{\tau}}) \\
\nabla{u_2}({p_{\tau}})
\end{pmatrix}\cdot{p_{\tau}}
\end{align*}
(where $p_{\tau}$ is considered as an element of $\mathbb{R}^{4\times{1}}$) defines a smooth vector field on $\mathbb{D}$. If ${S_{\tau}'}(1)$ was to vanish for some $\tau\in\mathbb{D}$, then (analogously to the proof of Property \ref{folipaperannoying5} in Lemma \ref{localfolichartsubm}) that would imply that $p_{\tau}$ was contained in the {\emph{complex}} vector space spanned by $\mathcal{V}({p_{\tau}})$, in contradiction to \ref{condi5} in Assumption \ref{topmanfolipaper}. Hence we have ${S_{\tau}'}(1)\neq{0}$ for all $\tau\in\mathbb{D}$.

Consequently, the collection of {\emph{real}} vector spaces spanned by ${S_{\tau}'}(1)\in\mathbb{R}^2\setminus\{0\}$ at the points $\tau\in\mathbb{D}$ yields a $\mathcal{C}^{\infty}$ distribution of real dimension $1$ on $\mathbb{D}$, which is trivially involutive. The Frobenius theorem implies the following:

\theoremstyle{plain}
\newtheorem{frobbyondiscfolipaper}[propo]{Lemma}
\begin{frobbyondiscfolipaper}
\label{frobbyondiscfolipaper}
There exist an open subset $\Omega$ of $\mathbb{D}$ containing $u(x)=0$ and a $\mathcal{C}^{\infty}$ diffeomorphism
\begin{align*}
\omega{=}({\omega_1},{\omega_2})\colon\Omega\to{(-1,1)\times{(-1,1)}}\text{,}
\end{align*}
such that
\begin{itemize}
\item{the real gradient $\nabla\omega_2$ vanishes nowhere on $\Omega$,}
\item{$\nabla\omega_2{(\tau)}$ and ${S_{\tau}'}(1)$ are orthogonal with respect to the standard inner product on $\mathbb{R}^2$ for all $\tau\in\Omega$.}
\end{itemize}
\end{frobbyondiscfolipaper}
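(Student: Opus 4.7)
The plan is to apply the Frobenius theorem for a $1$-dimensional involutive distribution on the $2$-dimensional manifold $\mathbb{D}$, which in this low-dimensional setting is essentially the flow-box theorem for the nowhere-vanishing smooth vector field $V(\tau) := S_{\tau}'(1)$. The output will be local coordinates straightening $V$ to $\partial/\partial\omega_1$; the second coordinate $\omega_2$ will then automatically be a first integral, and the orthogonality condition $\nabla\omega_2(\tau)\perp S_\tau'(1)$ is nothing other than the restatement that $\omega_2$ is constant along integral curves of $V$, since $\langle\nabla\omega_2(\tau),V(\tau)\rangle = (D\omega_2)_\tau(V(\tau))$.

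Concretely, I would pick a smooth embedded curve $\Sigma\subseteq\mathbb{D}$ through $u(x)=0$ that is transverse to $V(0)$, for instance by taking $\sigma\colon(-\varepsilon,\varepsilon)\to\mathbb{D}$, $\sigma(s):=s\cdot v$, where $v\in\mathbb{R}^2$ is any vector $\mathbb{R}$-linearly independent from $V(0)$ (which exists since $V(0)\neq 0$ and we are in $\mathbb{R}^2$). I would then consider the local flow $\psi(t,s)$ of $V$ starting at $\sigma(s)$, which is smooth on a neighborhood of $(0,0)$ by standard ODE theory and the smooth dependence on initial data. At $(0,0)$ the Jacobian of $\psi$ has columns $V(0)$ and $\sigma'(0)=v$, which are $\mathbb{R}$-linearly independent, so by the inverse function theorem $\psi$ is a $\mathcal{C}^{\infty}$ diffeomorphism from a neighborhood $U$ of $(0,0)$ onto a neighborhood $\widetilde{\Omega}$ of $0\in\mathbb{D}$. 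Denote the inverse by $(\widetilde{\omega}_1,\widetilde{\omega}_2)\colon\widetilde{\Omega}\to U$.

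By construction $V$ is tangent to the level sets $\{\widetilde{\omega}_2 = \mathrm{const}\}$ (those level sets are exactly the flow curves $\{\psi(\cdot,s)\}$), which gives $\nabla\widetilde{\omega}_2(\tau)\perp V(\tau)=S_\tau'(1)$ for every $\tau\in\widetilde{\Omega}$ in the standard inner product on $\mathbb{R}^2$. Moreover $\nabla\widetilde{\omega}_2$ vanishes nowhere on $\widetilde{\Omega}$ because $\widetilde{\omega}_2$ is a coordinate function of a diffeomorphism. After shrinking $\widetilde{\Omega}$ to a smaller open neighborhood $\Omega$ of $0$ on which $\widetilde{\omega}_j$ takes values in some bounded open interval $(a_j,b_j)$, I would finish by composing each component with an affine isomorphism $(a_j,b_j)\to(-1,1)$; linear rescaling preserves both the nonvanishing and the orthogonality properties of $\nabla\omega_2$, and yields the required diffeomorphism $\omega=(\omega_1,\omega_2)\colon\Omega\to(-1,1)\times(-1,1)$.

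There is no real obstacle here: the lemma is essentially a direct application of the flow-box theorem, and the geometric content (being tangent to level sets $=$ being orthogonal to the gradient) is immediate in $\mathbb{R}^2$. The only mild care needed is to ensure that the construction respects smoothness in $\tau$, which is handled by the smooth parameter dependence of ODE flows, and to keep track of the final rescaling so that the target is exactly $(-1,1)\times(-1,1)$ as stated.
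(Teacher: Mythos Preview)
Your proposal is correct and follows essentially the same approach as the paper: the paper simply invokes the Frobenius theorem for the trivially involutive one-dimensional distribution spanned by $\tau\mapsto S_{\tau}'(1)$ on $\mathbb{D}$, which in this setting is exactly the flow-box theorem you spell out in detail. Your argument is a fleshed-out version of the paper's one-line proof; the only cosmetic point is to make sure the final shrinking is done so that the image of $\omega$ is exactly the square $(-1,1)\times(-1,1)$, which you achieve by pulling back a small product of intervals via $(\widetilde{\omega}_1,\widetilde{\omega}_2)$ before rescaling.
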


\begin{proof}
This follows from the above considerations.
\end{proof}

If $p_1,p_2\in{\Phi}^{-1}(\mathbb{D}\times{\Omega})$ are points on the same $\mathbb{R}_{\geq{0}}$-ray originating at $0\in\mathbb{C}^2$, then $u({p_1})$ and $u({p_2})$ are not necessarily contained in the same level set of $\omega_2$, since, roughly speaking, one might temporarily leave the set ${\Phi}^{-1}(\mathbb{D}\times{\Omega})$ when going from $p_1$ to $p_2$ along the ray. If, however, we restrict our attention to a suitable smaller open neighborhood of $x$, where this problem does not arise, then the level sets of $\omega_2$ exhibit the desired behavior. That is the content of the following lemma:

\theoremstyle{plain}
\newtheorem{lemmaafolipaper}[propo]{Lemma}
\begin{lemmaafolipaper}
\label{lemmaafolipaper}
There exists an open neighborhood $W_x$ of $x$ in $N$ with the following properties:
\begin{enumerate}
\item\label{lemmaafolipaperprop1}{$W_x\subseteq{{\Phi}^{-1}(\mathbb{D}\times{\Omega})}$ and $W_x\cap{(-W_x)}=\emptyset$,}
\item\label{lemmaafolipaperprop2}{$u(W_x)\Subset\Omega$,}
\item\label{lemmaafolipaperprop3}{there exist an open subset $B_x$ of $\{p\in\mathbb{C}^2\colon{\Vert{p}\Vert{=\Vert{x}\Vert}}\}$ (which is equipped with the subspace topology it inherits from $\mathbb{C}^2$) and a real number ${0<}d_x{<1}$, such that:
\begin{itemize}
\item{$x\in{B_x}$,}
\item{${W_x}=\{t\cdot{p}\in\mathbb{C}^2\colon{{1-d_x}<t<{1+d_x}\text{ and }p\in{B_x}}\}$,}
\item{if $q\in{W_x}$ and if $(1-{d_x})/(1+{d_x})<t<(1+{d_x})/(1-{d_x})$, then $t\cdot{q}\in{N}$ and $u(t\cdot{q})\in\Omega$,}
\end{itemize}
}
\item\label{lemmaafolipaperprop4}{if ${p_1},{p_2}\in{W_x}$ lie on the same $\mathbb{R}_{\geq{0}}$-ray originating at $0\in\mathbb{C}^2$, then ${\omega_2}(u({p_1}))={\omega_2}(u({p_2}))$.}
\end{enumerate}
In fact, whenever $W_x$ is an open neighborhood of $x$ in $N$ having Properties \ref{lemmaafolipaperprop1}, \ref{lemmaafolipaperprop2} and \ref{lemmaafolipaperprop3}, then it will necessarily have Property \ref{lemmaafolipaperprop4}.
\end{lemmaafolipaper}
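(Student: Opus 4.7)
The plan is to construct $W_x$ in the shape mandated by Property \ref{lemmaafolipaperprop3}, namely as the image of a product $B_x \times (1-d_x, 1+d_x)$ under the ``polar'' map $(p, t) \mapsto tp$, where $B_x$ is an open subset of the Euclidean sphere through $x$. I would start by fixing a relatively compact open neighborhood $B' \times (1-d', 1+d')$ of $(x,1)$ whose image under this map lies in $\Phi^{-1}(\mathbb{D}\times\Omega)$, with closure inside $N$ and $u$-image relatively compact in $\Omega$, and with $B' \cap (-B') = \emptyset$ (easily arranged since $x \neq 0$ and $B'$ is a subset of a single sphere). Setting $B_x := B'$ and choosing $d_x \in (0, d')$ small enough that the enlarged interval $((1-d_x)^2/(1+d_x), (1+d_x)^2/(1-d_x))$ lies in $(1-d', 1+d')$ then secures Properties \ref{lemmaafolipaperprop1}--\ref{lemmaafolipaperprop3}: the disjointness $W_x \cap (-W_x) = \emptyset$ reduces to $B_x \cap (-B_x) = \emptyset$ upon comparing norms, and for $q = tp \in W_x$ with $r \in ((1-d_x)/(1+d_x), (1+d_x)/(1-d_x))$, the product $rt$ lies in $(1-d', 1+d')$, whence $rq = (rt)p \in N$ and $u(rq) \in \Omega$.

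The substantive part is the ``in fact'' assertion that Properties \ref{lemmaafolipaperprop1}--\ref{lemmaafolipaperprop3} force Property \ref{lemmaafolipaperprop4}. Suppose $p_1, p_2 \in W_x$ lie on the same ray from $0$, so $p_2 = s p_1$ for some $s > 0$. Writing $p_1 = t_1 q_1$ with $q_1 \in B_x$ and $t_1 \in (1-d_x, 1+d_x)$ as in Property \ref{lemmaafolipaperprop3} shows that $s$ lies in the open interval $I := ((1-d_x)/(1+d_x), (1+d_x)/(1-d_x))$, and the third bullet of Property \ref{lemmaafolipaperprop3} makes the smooth curve $\gamma \colon I \to \Omega$, $\sigma \mapsto u(\sigma p_1)$, well-defined and $\Omega$-valued. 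The claim reduces, by connectedness of $I$, to showing $(\omega_2 \circ \gamma)'(\sigma_0) = 0$ for every $\sigma_0 \in I$. Setting $q_0 := \sigma_0 p_1$ and $\tau_0 := u(q_0)$, the identity $u(q_0) = u(p_{\tau_0})$ together with openness of $N$ lets me invoke Property \ref{folipaperannoying5} of Lemma \ref{localfolichartsubm} to get $u(\eta q_0) = u(\eta p_{\tau_0}) = S_{\tau_0}(\eta)$ for $\eta$ in a neighborhood of $1$. Rewriting the left-hand side as $\gamma(\eta \sigma_0)$ and differentiating at $\eta = 1$ yields $\sigma_0\,\gamma'(\sigma_0) = S_{\gamma(\sigma_0)}'(1)$; pairing with $\nabla\omega_2(\gamma(\sigma_0))$ and using the orthogonality built into Lemma \ref{frobbyondiscfolipaper} then forces the derivative to vanish.

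The main obstacle is less a deep technical difficulty than careful bookkeeping with nested dilation intervals: one must arrange the expanded interval in the third bullet of Property \ref{lemmaafolipaperprop3} so that $N$-membership and $\Omega$-valuedness of $u$ remain available throughout the derivative computation, and one must correctly transfer the (infinitesimal) conclusion of Property \ref{folipaperannoying5} from the reference point $p_{\tau_0}$ to the actual point $q_0$ on the ray through $p_1$ in order to reach the identity $\sigma_0\,\gamma'(\sigma_0) = S_{\gamma(\sigma_0)}'(1)$ that drives the argument.
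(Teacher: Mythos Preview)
Your proposal is correct and follows essentially the same approach as the paper: you construct $W_x$ as a spherical-shell neighborhood (the paper simply declares Properties \ref{lemmaafolipaperprop1}--\ref{lemmaafolipaperprop3} ``clear''), and for Property \ref{lemmaafolipaperprop4} you differentiate $\omega_2\circ u$ along the ray, invoke Property \ref{folipaperannoying5} of Lemma \ref{localfolichartsubm} to identify the derivative with $\nabla\omega_2(\tau_0)\cdot S_{\tau_0}'(1)$, and conclude via the orthogonality in Lemma \ref{frobbyondiscfolipaper}. The only cosmetic difference is that the paper parameterizes the ray by the base point $p\in B_x$ on the sphere (so the relevant curve is $t\mapsto\omega_2(u(tp))$ on $(1-d_x,1+d_x)$), whereas you pivot at $p_1\in W_x$ and work on the dilated interval $I$; the computations are otherwise identical.
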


\begin{proof}
It is clear that there exists an open neighborhood $W_x$ of $x$ in $N$ having Properties \ref{lemmaafolipaperprop1}, \ref{lemmaafolipaperprop2} and \ref{lemmaafolipaperprop3}. We have to show that such a neighborhood necessarily has Property \ref{lemmaafolipaperprop4}.

To this end, let ${p_1},{p_2}\in{W_x}$ lie on the same $\mathbb{R}_{\geq{0}}$-ray originating at $0\in\mathbb{C}^2$. By Property \ref{lemmaafolipaperprop3}, there exist $p\in{B_x}$ and ${t_1},{t_2}\in{({1-d_x},{1+d_x})}$, such that ${p_1}=t_1{p}$ and ${p_2}=t_2{p}$. Hence it suffices to show that the derivative of the (clearly welldefined) smooth map
\begin{align*}
\chi\colon{({1-d_x},{1+d_x})}\to\mathbb{R}\text{, }t\mapsto{\omega_2}(u(t\cdot{p}))
\end{align*}
vanishes identically. So, given $t_0\in{({1-d_x},{1+d_x})}$, we need to show that ${\chi}'({t_0})=0$.

Let $\tau{:=}u(t_0\cdot{p})\in\Omega$ and recall that $p_{\tau}={\Phi}^{-1}(0,{\tau})\in{N}$. We trivially have $u(t_0\cdot{p})=u({p_\tau})$, so, using Property \ref{folipaperannoying5} in Lemma \ref{localfolichartsubm}, we find a $0<\widetilde{\delta}\ll\delta$, such that we have the following for all $t\in{({1-\widetilde{\delta}},{1+\widetilde{\delta}})}$:
\begin{itemize}
\item{$t\cdot{t_0}\cdot{p}$ and $t\cdot{p_{\tau}}$ are contained in $N$,}
\item{$u({t\cdot{t_0}\cdot{p}})=u({t\cdot{p_{\tau}}})$,}
\item{$t\cdot{t_0}\in{({1-d_x},{1+d_x})}$, i.e.\ $t\cdot{t_0}\cdot{p}\in{W_x}$.}
\end{itemize}
Using this, we can define a map
\begin{align*}
\widetilde{\chi}\colon{({1-\widetilde{\delta}},{1+\widetilde{\delta}})}\to\mathbb{R}\text{, }t\mapsto\chi{(t\cdot{t_0})}\text{.}
\end{align*}
Since $t_0\neq{0}$ and $\widetilde{\chi}'(t)={\chi}'(t\cdot{t_0})\cdot{t_0}$ for all $t\in{({1-\widetilde{\delta}},{1+\widetilde{\delta}})}$, it suffices to show that $\widetilde{\chi}'(1)=0$. But, using that $u({t\cdot{t_0}\cdot{p}})=u({t\cdot{p_{\tau}}})$ for all $t\in{({1-\widetilde{\delta}},{1+\widetilde{\delta}})}$ and that $u(p_{\tau})=\tau$, one readily computes
\begin{align*}
\widetilde{\chi}'(1) & =\left(({\nabla\omega_2})(u(t\cdot{p_\tau}))\cdot\begin{pmatrix}
\nabla{u_1}(t\cdot{p_{\tau}}) \\
\nabla{u_2}(t\cdot{p_{\tau}})
\end{pmatrix}\cdot{p_{\tau}}\right)\Bigg\rvert_{t=1}\\
& ={\nabla\omega_2}({\tau})\cdot{{S_{\tau}'}(1)}\\
& =0\text{,}
\end{align*}
where the last equality follows from Lemma \ref{frobbyondiscfolipaper}.
\end{proof}

From now on, we fix an open neighborhood $W_x$ of $x$ as in Lemma \ref{lemmaafolipaper}. Furthermore, we choose an open subset $\widetilde{W_x}$ of $\mathbb{C}^2$ and a $0<\widetilde{\delta_x}\ll{1}$ with the following properties:
\begin{itemize}
\item{$\widetilde{\delta_x}<\delta$ (see Lemma \ref{localfolichartsubm}),}
\item{$x\in\widetilde{W_x}\Subset{W_x}$,}
\item{$t\cdot{q}\in{W_x}$, whenever $q\in\widetilde{W_x}$ and $t\in{(1-{\widetilde{\delta_x}},1+{\widetilde{\delta_x}})}$.}
\end{itemize}

Owing to these properties and Lemma \ref{lemmaafolipaper}, the following maps are welldefined and smooth:
\begin{align*}
\mathcal{M}\colon\widetilde{W_x}\times{(1-{\widetilde{\delta_x}},1+{\widetilde{\delta_x}})} & \to\mathbb{R}\text{,}\\
(q,t) & \mapsto{\omega_1}(u({t\cdot{q}}))-{\omega_1}(u(x))\text{,}\\
\mathcal{N}\colon\widetilde{W_x}\times{(1-{\widetilde{\delta_x}},1+{\widetilde{\delta_x}})} & \to\mathbb{R}\text{,}\\
(q,t) & \mapsto{\omega_2}(u({t\cdot{q}}))-{\omega_2}(u(x))\text{.}
\end{align*}
Noting that $p_{0}={\Phi}^{-1}(0,0)=x$ and using Property \ref{lemmaafolipaperprop4} in Lemma \ref{lemmaafolipaper} we compute:
\begin{align*}
\begin{pmatrix}
\frac{\partial\mathcal{M}}{\partial{t}}(x,1)\\
0
\end{pmatrix} & =\begin{pmatrix}
\frac{\partial\mathcal{M}}{\partial{t}}(x,1)\\[1ex]
\frac{\partial\mathcal{N}}{\partial{t}}(x,1)
\end{pmatrix}\\
& =\left(\operatorname{J}_{\omega}(u(tx))\cdot\begin{pmatrix}
\nabla{u_1}(tx) \\
\nabla{u_2}(tx)
\end{pmatrix}\cdot{x}\right)\Bigg\rvert_{t=1}\\
& =\operatorname{J}_{\omega}(u(x))\cdot\begin{pmatrix}
\nabla{u_1}({p_{0}}) \\
\nabla{u_2}({p_{0}})
\end{pmatrix}\cdot{p_{0}}\\
& =\operatorname{J}_{\omega}(u(x))\cdot{{S_{0}'}(1)}\text{.}
\end{align*}
But we have ${S_{\tau}'}(1)\neq{0}$ for all $\tau\in\mathbb{D}$ and $\omega$ is a $\mathcal{C}^{\infty}$ diffeomorphism, so we can conclude that
\begin{align*}
\frac{\partial\mathcal{M}}{\partial{t}}(x,1)\neq{0}\text{.}
\end{align*}
So, since $\mathcal{M}$ is smooth and we clearly have $\mathcal{M}(x,1)=0$, the implicit function theorem implies the following lemma:

\theoremstyle{plain}
\newtheorem{defofcalitfolipaper}[propo]{Lemma}
\begin{defofcalitfolipaper}
\label{defofcalitfolipaper}
There exist an open neighborhood $V_x$ of $x$ in $\widetilde{W_x}$, an open neighborhood $\mathcal{I}$ of $1$ in ${(1-{\widetilde{\delta_x}},1+{\widetilde{\delta_x}})}$ and a smooth map $\mathcal{T}\colon{V_x}\to\mathcal{I}$ with $\mathcal{T}(x)=1$, such that for all $(q,t)\in{V_x}\times\mathcal{I}$ we have:
\begin{align*}
\mathcal{M}(q,t)=0\text{ if and only if }t=\mathcal{T}(q)\text{.}
\end{align*}
\end{defofcalitfolipaper}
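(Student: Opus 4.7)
The lemma is positioned to be an immediate application of the classical implicit function theorem, and the preceding paragraph establishes exactly the two hypotheses one needs. Namely, $\mathcal{M}$ is smooth on the open set $\widetilde{W_x}\times{(1-\widetilde{\delta_x},1+\widetilde{\delta_x})}$, it satisfies $\mathcal{M}(x,1)=0$ (since $u(x)=\omega_1^{-1}$ of its own value), and the chain-rule computation just completed shows
\[
\frac{\partial\mathcal{M}}{\partial t}(x,1)\neq 0,
\]
using Assumption \ref{topmanfolipaper}\ref{condi5} (to guarantee ${S_0'}(1)\neq 0$) and the fact that $\omega$ is a $\mathcal{C}^{\infty}$ diffeomorphism.

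With these ingredients in hand, I would simply apply the implicit function theorem to the equation $\mathcal{M}(q,t)=0$ near the point $(x,1)$, regarding $q\in\widetilde{W_x}\subseteq\mathbb{R}^4$ as the parameter and $t$ as the scalar unknown. This yields open neighborhoods $V_x$ of $x$ in $\widetilde{W_x}$ and $\mathcal{I}$ of $1$ in $(1-\widetilde{\delta_x},1+\widetilde{\delta_x})$, together with a unique smooth function $\mathcal{T}\colon V_x\to\mathcal{I}$ satisfying $\mathcal{T}(x)=1$ and $\mathcal{M}(q,\mathcal{T}(q))=0$ for every $q\in V_x$, which furnishes the ``if'' direction of the asserted biconditional.

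To secure the ``only if'' direction I would, if needed, shrink $V_x$ and $\mathcal{I}$ further so that $\partial_t\mathcal{M}$ is nonzero throughout $V_x\times\mathcal{I}$; this is possible by continuity, since $\partial_t\mathcal{M}$ is continuous and nonzero at $(x,1)$. For each fixed $q\in V_x$, the map $t\mapsto\mathcal{M}(q,t)$ is then strictly monotone on the interval $\mathcal{I}$, so $t=\mathcal{T}(q)$ is its unique zero in $\mathcal{I}$. No genuine obstacle arises: all the real work — constructing $\mathcal{M}$ from $\omega$ and $u$ and verifying the transversality $\partial_t\mathcal{M}(x,1)\neq 0$ by combining Lemma \ref{frobbyondiscfolipaper} with the homogeneity/linear-independence hypothesis — has already been executed before the statement.
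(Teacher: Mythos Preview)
Your proposal is correct and follows exactly the same route as the paper: the paper's entire proof reads ``This follows from the above considerations,'' referring to the smoothness of $\mathcal{M}$, the equality $\mathcal{M}(x,1)=0$, and the computation $\partial_t\mathcal{M}(x,1)\neq 0$, which together feed into the implicit function theorem. Your added remark about shrinking $V_x\times\mathcal{I}$ to make $\partial_t\mathcal{M}$ nonvanishing (hence securing uniqueness via monotonicity) is a harmless and standard way to spell out the ``only if'' direction.
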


\begin{proof}
This follows from the above considerations.
\end{proof}

Pick an open subset $\widetilde{V_x}$ of $\mathbb{C}^2$, an open subset $\widetilde{B_x}$ of $\{p\in\mathbb{C}^2\colon{\Vert{p}\Vert{=\Vert{x}\Vert}}\}$ and a $0<{\lambda_x}\ll{1}$, such that:
\begin{itemize}
\item{$x\in\widetilde{V_x}\Subset{V_x}$ and $\widetilde{V_x}\cap{(-\widetilde{V_x})}=\emptyset$ and $x\in\widetilde{B_x}$,}
\item{$\widetilde{V_x}=\{t\cdot{p}\in\mathbb{C}^2\colon{{1-\lambda_x}<t<{1+\lambda_x}\text{ and }p\in\widetilde{B_x}}\}$.}
\end{itemize}
Let $n$ be a positive integer, as in the statement of Theorem \ref{maintheoremfoliation}. We now define:
\begin{align*}
g\colon\widetilde{V_x}\to\mathbb{R}\text{, }q\mapsto{\left({\frac{1}{\mathcal{T}(q)}}\right)}^n\text{,}
\end{align*}
which is clearly welldefined. Since we can shrink $N$, it suffices to show that $g$ has the desired properties on $\widetilde{V_x}$.

It is obvious that $g$ is of class $\mathcal{C}^{\infty}$ and everywhere $>0$. Now assume that $q_1,q_2\in\widetilde{V_x}$ lie on the same leaf of the restriction of $\mathcal{F}$ to $\widetilde{V_x}$. In particular we have $u({q_1})=u({q_2})$. We have to show that $g({q_1})=g({q_2})$; so it suffices to prove that $\mathcal{T}({q_1})=\mathcal{T}({q_2})$.\\
Owing to the choices we made, we have $\mathcal{T}({q_2})\in{(1-{\delta},1+{\delta})}$ and the points $q_1$, $q_2$, $\mathcal{T}({q_2})\cdot{q_1}$ and $\mathcal{T}({q_2})\cdot{q_2}$ are contained in $N$. Since $u({q_1})=u({q_2})$, we can hence apply Lemma \ref{localfolichartsubm} to obtain $u({\mathcal{T}({q_2})\cdot{q_1}})=u({\mathcal{T}({q_2})\cdot{q_2}})$. Since $({q_1},\mathcal{T}({q_2}))$ and $({q_2},\mathcal{T}({q_2}))$ are contained in $V_x\times\mathcal{I}$, we get
\begin{align*}
\mathcal{M}({q_1},\mathcal{T}({q_2}))=\mathcal{M}({q_2},\mathcal{T}({q_2}))=0\text{;}
\end{align*}
Lemma \ref{defofcalitfolipaper} then implies that $\mathcal{T}({q_1})=\mathcal{T}({q_2})$, as desired.

It remains to show that $g$ is $\mathbb{R}$-homogeneous of degree $n$. To this end, let $q\in\widetilde{V_x}$, $t\in\mathbb{R}$ and assume $t\cdot{q}\in\widetilde{V_x}$. We have to show that $g(tq)=t^n\cdot{g(q)}$. By choice of $\widetilde{V_x}$ one readily reduces to the case that $q\in\widetilde{B_x}$ and $t\in{(1-{\lambda_x},1+{\lambda_x})}$.

The map
\begin{align*}
\colon{(1-{\lambda_x},1+{\lambda_x})} & \to\mathbb{R}^2\text{,}\\
s & \mapsto{(\mathcal{M}{(sq,\mathcal{T}(sq))},\mathcal{N}{(sq,\mathcal{T}(sq))})}
\end{align*}
is welldefined and constant by Lemmas \ref{defofcalitfolipaper} and \ref{lemmaafolipaper} and by the choices we made. Since $\omega$ is a $\mathcal{C}^{\infty}$ diffeomorphism, that implies that the following map is welldefined and constant:
\begin{align*}
\colon{(1-{\lambda_x},1+{\lambda_x})}\to\mathbb{C}\text{, }
s\mapsto{u(\mathcal{T}(sq)\cdot{sq})}\text{.}
\end{align*}
By differentiating and thereupon using Property \ref{condi5} in Assumption \ref{topmanfolipaper} analogously to above, we obtain:
\begin{align*}
\mathcal{T}(sq)+s\cdot\nabla\mathcal{T}(sq)\cdot{q}=0\text{ for all }s\in{(1-{\lambda_x},1+{\lambda_x})}\text{,}
\end{align*}
which directly implies that the map
\begin{align*}
\colon{(1-{\lambda_x},1+{\lambda_x})}\to\mathbb{R}\text{, }
s\mapsto{\mathcal{T}(sq)\cdot{s}}
\end{align*}
is constant. Since $t\in{(1-{\lambda_x},1+{\lambda_x})}$, we get $\mathcal{T}(q)=\mathcal{T}(tq)\cdot{t}$. A straightforward calculation then gives $g(tq)=t^n\cdot{g(q)}$, as desired.

\section{Proof of Corollary \ref{bumppingfoliacoro}}
This section is devoted to proving Corollary \ref{bumppingfoliacoro}. To this end, let $P$ and $x$ be as in the statement of Corollary \ref{bumppingfoliacoro}.

\theoremstyle{definition}
\newtheorem{xxcomplehessiannotationx2}[propo]{Notation}
\begin{xxcomplehessiannotationx2}
\label{xxcomplehessiannotationx2}
We denote the Complex Hessian Matrix or the Levi Matrix of $P$ as $H_P$, i.e.\
\begin{align*}
\arraycolsep=0.2pt\def\arraystretch{1.4}
H_P=\left( \begin{array}{ccc}
\frac{\partial^2 P}{\partial{z}\partial{\overline{z}}} & \frac{\partial^2 P}{\partial{w}\partial{\overline{z}}} \\
\frac{\partial^2 P}{\partial{z}\partial{\overline{w}}} & \frac{\partial^2 P}{\partial{w}\partial{\overline{w}}} \end{array} \right)\text{.}
\end{align*}
\end{xxcomplehessiannotationx2}

Firstly we note that, due to the formulation of Corollary \ref{bumppingfoliacoro} and the proof of Theorem \ref{maintheoremfoliation}, we neither have to concern ourselves with the difference between local leaves and global leaves nor with the smoothness of the foliation.

Furthermore, after having checked the assumptions for applying Theorem \ref{maintheoremfoliation}, it will be immediate that, after shrinking $W$ if necessary, the leaves are submanifolds of $\mathbb{C}^2$ of real dimension $2$ and hence complex curves, since their tangent spaces at every point are {\emph{complex}} linear subspaces of $\mathbb{C}^2$. We also remark that harmonicity of $P$ along the leaves of the foliation will be a welldefined notion due to the leaves being complex curves.

Because of these remarks we will simply check that the assumptions for applying Theorem \ref{maintheoremfoliation} are satisfied.

By assumption we have $H_P{(x)}\neq{0}$, so that at least one of the two vector fields
\begin{align*}
(z,w)\mapsto\begin{pmatrix} -\frac{\partial^2 P}{\partial{w}\partial{\overline{z}}} \\[1ex] \phantom{-}\frac{\partial^2 P}{\partial{z}\partial{\overline{z}}}\end{pmatrix}(z,w)\text{ and }(z,w)\mapsto\begin{pmatrix} -\frac{\partial^2 P}{\partial{w}\partial{\overline{w}}} \\[1ex] \phantom{-}\frac{\partial^2 P}{\partial{z}\partial{\overline{w}}}\end{pmatrix}(z,w)
\end{align*}
does not vanish in $x$. Let $\mathcal{V}$ be one of these two vector fields, such that $\mathcal{V}(x)\neq{0}$. Since the Levi determinant of $P$ vanishes on $\mathbb{C}^2$, we get $H_P\cdot\mathcal{V}\equiv{0}$ by choice of $\mathcal{V}$, which shows that $P$ is indeed harmonic along the leaves of the foliation whose existence we are about to establish.\\
Now we pick an open neighborhood $N\subseteq\mathbb{C}^2\setminus\{0\}$ of $x$, such that $\mathcal{V}$ vanishes nowhere on $N$ and such that $N$ does not meet a complex line through $0$ along which $P$ is harmonic. The latter is possible by assumption on $P$ and $x$.

It remains to verify the properties in Assumption \ref{topmanfolipaper}. Properties \ref{condi1} and \ref{condi4} are clear. Noting that $\mathcal{V}$ does not vanish on $N$ and hence defines a $\mathcal{C}^{\infty}$ distribution of real dimension $2$ on $N$, Property \ref{condi3} follows from directly computing the Lie bracket $[\mathcal{V},i\cdot\mathcal{V}]$ in real Cartesian coordinates and making use of the fact that $\det{H_P}\equiv{0}$ on $\mathbb{C}^2$.\\
By restricting the foliation on $N$ obtained from Frobenius theorem to a foliation chart containing $x$ and subsequently replacing $N$ by the open set associated to said chart, we can assume that the leaves of the foliation are (embedded) complex submanifolds of $N$. The leaves of the restricted foliation are precisely the plaques of the original foliation in the foliation chart of consideration.

Using the assumptions on $P$ and $x$ and the properties established thus far, we can (by an argument similar to the one appearing in the previous section) find smooth functions $u={u_1}+i{u_2}\colon{N}\to\mathbb{D}$ and $\phi\colon{N}\to\mathbb{D}$ having Properties \ref{folipaperannoying1}, \ref{folipaperannoying2}, \ref{folipaperannoying3} and \ref{folipaperannoying4} from Lemma \ref{localfolichartsubm} (it should be noted that this is potentially accompanied by shrinking $N$  and restricting the foliation yet again).

In order to verify Property \ref{condi5} in Assumption \ref{topmanfolipaper}, we assume for the sake of a contradiction that there exists a point $p\in{N}$, such that $p$ and $\mathcal{V}(p)$ are $\mathbb{C}$-linearly dependent. Since $\mathcal{V}(p)\neq{0}$, we have that $p$ in contained in the complex vector subspace of $\mathbb{C}^2$ spanned by $\mathcal{V}(p)$. Since $P$ is homogeneous, we find a small $0<{\delta}\ll{1}$ (not to be confused with the $\delta$ appearing in the previous section), such that for all $t\in\left({1-{\delta},1+{\delta}}\right)$ we have:
\begin{align*}
t\cdot{p}\in{N}\text{ and }{p}\in\operatorname{Span}_{\mathbb{C}}\left(\left\{\mathcal{V}(t\cdot{p})\right\}\right)\text{.}
\end{align*}
We consider the map
\begin{align*}
S\colon{\left({1-{\delta},1+{\delta}}\right)}\to\mathbb{D}\text{, }t\mapsto{u(t\cdot{p})}\text{.}
\end{align*}
In real coordinates, the derivative at $t\in\left({1-{\delta},1+{\delta}}\right)$ computes to
\begin{align*}
S'(t)=\begin{pmatrix}
\nabla{u_1}(t\cdot{p}) \\
\nabla{u_2}(t\cdot{p})
\end{pmatrix}\cdot{p}\text{,}
\end{align*}
where $p$ is considered as an element of $\mathbb{R}^{4\times{1}}$ and the gradients are considered as elements of $\mathbb{R}^{1\times{4}}$. As seen above, $p$ is contained in the $\mathbb{R}$-vector space spanned by $\mathcal{V}(t\cdot{p})$ and $i\cdot\mathcal{V}(t\cdot{p})$, which implies $S'(t)=0$ by the defining properties of $u$. It follows that $S$ is constant, so the points $t\cdot{p}$, $t\in\left({1-{\delta},1+{\delta}}\right)$, are all contained in $L:=\{q\in{N}\colon{u(q)=u(p)}\}$. Recalling the behavior of the level sets of $u$, we find an open neighborhood $\mathcal{Z}$ of $p$ in $N$ and a holomorphic coordinate system $({\zeta}_1,{\zeta}_2)\colon\mathcal{Z}\to\mathbb{C}^2$, such that
\begin{align*}
\{q\in\mathcal{Z}\colon{\zeta_2}(q)=0\}=\mathcal{Z}\cap{L}\text{.}
\end{align*}
Let $X\subseteq\mathbb{C}$ be a small open disc centered at $1$, such that $s\cdot{p}\in\mathcal{Z}$ for all $s\in{X}$. The map
\begin{align*}
\colon{X}\to\mathbb{C}\text{, }s\mapsto{\zeta_2}(s\cdot{p})
\end{align*}
is holomorphic and vanishes on $X\cap{\left({1-{\delta},1+{\delta}}\right)}$; hence said map vanishes on all of $X$. But this immediately gives that
\begin{align*}
(\colon{X}\to\mathbb{C}\text{, }s\mapsto{u(s\cdot{p})})\equiv{u(p)}\text{.}
\end{align*}
Writing $s=a+ib$, applying $\partial{}/\partial{a}$ and considering $p$ as an element of $\mathbb{R}^{4\times{1}}$ again, we get the following in real coordinates:
\begin{align*}
\begin{pmatrix}
\nabla{u_1}(s\cdot{p}) \\
\nabla{u_2}(s\cdot{p})
\end{pmatrix}\cdot{p}=0\text{ for all }s\in{X}\text{.}
\end{align*}
So, with respect to the standard inner product on $\mathbb{R}^4$, we get that $p$ is contained in the orthogonal complement of the $\mathbb{R}$-span of $\nabla{u_1}(s\cdot{p})$ and $\nabla{u_2}(s\cdot{p})$, for all $s\in{X}$. But $\nabla{u_1}(s\cdot{p})$ and $\nabla{u_2}(s\cdot{p})$ are linearly independent over $\mathbb{R}$, i.e.\ said orthogonal complement has real dimension $2$ and hence equals $\operatorname{Span}_{\mathbb{C}}\left(\left\{\mathcal{V}(s\cdot{p})\right\}\right)$. We get
\begin{align*}
{p}\in\operatorname{Span}_{\mathbb{C}}\left(\left\{\mathcal{V}(s\cdot{p})\right\}\right)\text{ for all }s\in{X}\text{.}
\end{align*}
Since $H_P\cdot\mathcal{V}\equiv{0}$, this implies that $H_P{(s\cdot{p})}\cdot{p}=0$ in complex coordinates for all $s\in{X}$. Noting that this expression is real-analytic in $s\in\mathbb{C}$, we deduce that $H_P{(s\cdot{p})}\cdot{p}=0$ for all $s\in\mathbb{C}$. But that implies that $P$ is harmonic along the complex line through $0$ and $p$. Since, however, $N$ was chosen to not meet a complex line through $0$ along which $P$ is harmonic, we get $p\not\in{N}$. We have arrived at the desired contradiction.

\bibliographystyle{amsplain}
\bibliography{refspaper3}

\end{document}